\documentclass[hidelinks, reqno, 12pt]{amsart}

\usepackage[margin=1.25in]{geometry}

\usepackage{amsfonts,amssymb,eucal,amsmath,bbm,amsthm,mathtools, hyperref}

\newtheorem{thm}{Theorem}

\newtheorem{lemma}[thm]{Lemma}
\newtheorem{prop}[thm]{Proposition}

\theoremstyle{remark}
\newtheorem*{rmk}{Remark}

\newcommand{\E}{\mathbb{E}}
\newcommand{\N}{\mathbb{N}}

\renewcommand{\P}{\mathbb{P}}

\renewcommand{\L}{\mathcal{L}}

\newcommand{\ind}{\mathbbm{1}}

\author{David Judkovich}

\email{david.judkovich@case.edu}

\address{Department of Mathematics, Applied Mathematics, and Statistics, Case Western Reserve University, 10900 Euclid Ave.,
	Cleveland, Ohio 44106, U.S.A.}

\title{The Cycle Structure of Permutations Without Long Cycles}

\begin{document}
	\maketitle
	
	\begin{abstract}
		We consider the cycle structure of a random permutation $\sigma$ chosen
    uniformly from the symmetric group, subject to the constraint that $\sigma$
    does not contain cycles of length exceeding $r.$ We prove that under
    suitable conditions the distribution of the cycle counts is approximately
    Poisson and obtain an upper bound on the total variation distance between
    the distributions using Stein's method of exchangeable pairs. Our results
    extend the recent work of Betz, Sch\"{a}fer, and Zeindler in
    \cite{macroscopic_cycles}.
	\end{abstract}

	\section{Introduction}
	
	The matching problem, which asks for the probability that a uniformly
  selected permutation $\sigma \in S_n$ has no fixed points, is one of the
  oldest problems in probability theory. It is not difficult to show, using the
  inclusion-exclusion principle, that this probability converges to $1/e$ as $n
  \rightarrow \infty$, a result that was first given in \cite{montmort} in
  1708. More generally, it is a classical result that for large $n$ the number
  of fixed points of a randomly chosen permutation on $n$ letters is
  approximately distributed as a Poisson random variable with mean 1; see, for
  example, \cite{goncarov}.
	
	One can also rephrase the matching problem in terms of cycles by noting that
  the number of matches is exactly the number of $1$-cycles. A natural
  extension is to then consider the expected value and distribution of the
  number of $k$-cycles in $\sigma$, a problem which was considered in
  \cite{cycle_structure_random_permutations}. The authors showed that if $C_j$
  is the number of cycles of length $j$ in $\sigma$ and $(Y_1, Y_2, ..., Y_b)$
  is a vector of independent Poisson random variables with $Y_j \sim
  Poi_{1/j}$, then the total variation distance between the distributions of
  $(C_1, C_2,..., C_b)$ and $(Y_1, Y_2, ..., Y_b)$ decays to zero
  super-exponentially fast as a function of $n/b$ if and only if $b = o(n)$. In
  the same paper the authors showed that when conditioning on cycle counts $C_j
  = c_j$ for $j \in J \subset \{1,2,..., b\}$ with $\sum_{j \in J} j c_j
  = s \leq n$, the limiting distribution is $(Z_1, Z_2,..., Z_b)$ where 
	\[
		Z_j =
		\begin{cases}
			c_j & j \in J. \\
			Y_j & \text{otherwise}
		\end{cases}
	\]
	if $(n - s) / b \rightarrow \infty$ and $Y_j \sim Poi_{1/j}$ as before.
	While this result applies to permutations without short cycles, it does not
  allow for conditioning on the absence of long cycles; this opposite regime is
  the subject of this paper. Let
	\[
		S_n^r = \{ \sigma \in S_n : \sigma \text{ does not contain a cycle of length
		exceeding } r \}.
	\]
	The set $S_n^r$ was first studied by Goncharov in \cite{goncarov} in 1944.
  His estimates of the size of $S_n^r$ were further refined in
  \cite{random_permutations_without_long_cycles}, and we will make use of these
  results in what follows. More broadly, the structure of different subsets of
  the symmetric group under the uniform measure has been an active area of
  study. For example, the set of permutations with cycle lengths belonging to
  a subset $A \subset \N$ is considered in \cite{yakymiv}. 
	
	Recently, in \cite{macroscopic_cycles}, it was shown that if $W  = (W_1
  , W_2, ..., W_d)$ is a vector of independent Poisson random variables with
  $Y_i \sim Poi_{1 / i}$, $r > n^\alpha$ for some $0 < \alpha < 1$, and $d
  = o(r / \log(n))$ then
	\begin{align}
		\| \L(W) - \L(Y) \|_{TV} \leq  C \left( \frac{r}{n} + \frac{d \log(n)}{r} \right) \label{macroscopic_theorem}
	\end{align}
	for some constant $C$. In this paper we will prove the following result.
	\begin{thm} \label{main_theorem}
		Suppose $\sqrt{n \log(n)} \leq r \leq n$ and $d = o(r / \log(n / r))$.
		Let $W_k(\sigma)$ be the number of $k$-cycles in a permutation $\sigma$
    chosen uniformly from $S_n^r$ and let $W = (W_1, W_2, ..., W_d)$.
		Let $Y = (Y_{1}, Y_{2}, ..., Y_{d})$ have independent coordinates with $Y_i
    \sim Poi_{1 / i}$. Let $u = n /r$. Then there exists a constant $C > 0$, independent of
    $d,n,r$, such that
		\begin{align*}
		\| \L(W) - \L(Y) \|_{TV} \leq  \frac{2 d \log(d) + 10d}{n - 1} +  C \frac{(d^2 + du) \log(u + 1)}{nr}.
		\end{align*}
	\end{thm}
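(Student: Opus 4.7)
The plan is to apply Stein's method of exchangeable pairs for multivariate Poisson approximation, adapted to the constrained uniform measure on $S_n^r$. Given $\sigma$ uniform on $S_n^r$, I would construct $\sigma'$ by selecting a transposition $\tau$ uniformly from the $\binom{n}{2}$ transpositions in $S_n$ and setting $\sigma' = \sigma\tau$ if $\sigma\tau \in S_n^r$, and $\sigma' = \sigma$ otherwise. The associated kernel is symmetric on $S_n^r$, so the uniform measure is reversible, which makes $(\sigma,\sigma')$ and hence $(W(\sigma),W(\sigma'))$ exchangeable.

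Next I would verify an approximate linear regression property. A transposition $(i,j)$ acts on the cycle structure either by splitting a single cycle of $\sigma$ of length $a+b$ into two cycles of lengths $a,b$, or by merging two cycles of lengths $a,b$ into one of length $a+b$; the rejected transpositions are exactly the merges with $a+b>r$. A careful enumeration of the effect on $W_k$ should yield
\[
  \E[W_k(\sigma') - W_k(\sigma) \mid \sigma] = \frac{2}{n-1}\left(\frac{1}{k} - W_k(\sigma)\right) + R_k(\sigma),
\]
where $R_k(\sigma)$ is the bias from rejected merges. The conditional second moments $\E[(W_k' - W_k)(W_\ell' - W_\ell)\mid \sigma]$ are computed the same way, and since $|W_k' - W_k|\leq 2$ with at most two coordinates changing per step, the large-jump contributions to the Stein bound are trivial.

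Substituting these ingredients into the multivariate exchangeable-pair bound for Poisson approximation (in the style of Chatterjee--Diaconis--Meckes) yields the desired total variation estimate, naturally split into two pieces. The first, $(2d\log d + 10d)/(n-1)$, should come from the unconstrained part $\frac{2}{n-1}(1/k - W_k)$ and match, up to constants, the known Stein bound for the cycle counts of a uniform $\sigma \in S_n$; the $d\log d$ factor reflects the standard summation of Stein factors against $\lambda_k = 1/k$. The second piece, $C(d^2 + du)\log(u+1)/(nr)$, should arise from estimating $\E|R_k|$ together with the associated cross-covariance corrections; both reduce to counting pairs of cycles in $\sigma$ whose combined length exceeds $r$, with one of the cycles having length in $\{1,\ldots,d\}$.

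The main obstacle is thus obtaining sharp, uniform-in-$\sigma$ control on these long-cycle pair counts under the conditioned measure. This is where the refined asymptotics for $|S_n^r|$ from \cite{random_permutations_without_long_cycles} enter decisively: they allow one to estimate the joint probability that $\sigma \in S_n^r$ contains prescribed cycles of lengths $k$ and $\ell$ with $k+\ell > r$, and in particular yield a cycle-count density near length $r$ that produces the $\log(u+1)$ factor, where $u = n/r$. The $d^2$ term then tracks the sum over coordinate pairs entering the cross-covariance, while the $du$ term captures the interaction of short $k$-cycles with long cycles of length in $[r-k,r]$. Pushing this analysis below the threshold $r > n^\alpha$ covered by \eqref{macroscopic_theorem} all the way down to $r = \sqrt{n\log n}$ is the quantitative heart of the argument.
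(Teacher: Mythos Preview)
Your exchangeable-pair construction is exactly the one the paper uses, and the high-level strategy---Stein's method via a random transposition on $S_n^r$ with rejection---is correct. But the specific Stein machinery you describe is the one for \emph{normal} approximation, not Poisson: the multivariate Poisson bound actually used (Proposition~10 of \cite{exchangeable_pairs_poisson}, stated here as Theorem~\ref{stein_multivariate}) involves neither a linear-regression condition nor conditional second moments. What it requires are the conditional probabilities of the events $A_k=\{W_k'=W_k+1,\ W_j'=W_j\text{ for }j>k\}$ and $B_k=\{W_k'=W_k-1,\ W_j'=W_j\text{ for }j>k\}$, and one must show $c_k\,\P[A_k\mid\sigma]\approx\lambda_k$ and $c_k\,\P[B_k\mid\sigma]\approx W_k$ \emph{separately}, not merely that their difference has the right drift. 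The ``staircase'' constraint (only coordinates $j>k$ are frozen) is precisely what handles the two-coordinate moves you dismiss as trivial: a single transposition can simultaneously destroy an $\ell$-cycle and create a $k$-cycle with $k<\ell\le d$, and such moves are absorbed by the asymmetry of $A_k,B_k$ (see the indicators $\ind_{L(C_a)\le d+k}$ and $\ind_{d<L(C_a)<2k}$ in Lemma~\ref{multivariate_plus}), not bounded away as large jumps.

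Your attribution of the second error term is also off. The rejected merges---pairs with $L(C_a)=k$ and $L(C_b)>r-k$---do appear (in Lemma~\ref{multivariate_minus}), but their contribution is only $O(1/(n-k))$ per $k$ and is absorbed into the \emph{first} piece $(2d\log d+10d)/(n-1)$. The term $C(d^2+du)\log(u+1)/(nr)$ has a different source: it is the accumulated error from the fact that under the conditioned measure $\P[L(C_a)=k]$ is not exactly $1/n$ but $\frac{1}{n}\bigl(1+O((\xi(u)k+u\log(u+1))/r)\bigr)$, which is Lemma~\ref{cycle_containing_element_length} and ultimately the Dickman-function estimate for $|S_n^r|/n!$ in Proposition~\ref{s_n_r_size}. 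No cross-covariances are computed or needed; the $d^2$ arises simply from $\sum_{k=1}^d O(\xi(u)k/(nr))$.
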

	\noindent
	In particular, when $r \geq \sqrt{n \log(n)}$ Theorem \ref{main_theorem}
  provides a sharper bound than \eqref{macroscopic_theorem}.

	\section{Background}
	
	In this section we state several lemmas and propositions that will be
  instrumental in proving Theorem \ref{main_theorem}.
	The following result from \cite{random_permutations_without_long_cycles}
  gives an estimate of the size of the set $S_n^r$.
	\begin{prop} [\cite{random_permutations_without_long_cycles}, Theorem 4] \label{s_n_r_size}
		Suppose $\sqrt{n \log(n)} \leq r \leq n$. Let $u = n / r$. Then 
		\begin{align*}
			\nu(n,r) \coloneqq \frac{|S_n^r|}{|S_n|} = \rho(u) \left(1 + O \left( \frac{u \log(u + 1)}{r} \right)   \right).
		\end{align*}
		where $\rho$ is the Dickman function, the continuous solution to the difference-differential equation
		\begin{align*}
			t \rho'(t) + \rho(t - 1) = 0.
		\end{align*}
		with initial condition $\rho(t) = 1$ for $0 \leq t \leq 1$.
	\end{prop}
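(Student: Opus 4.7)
The plan is to carry out a saddle-point analysis of the exponential generating function for permutations with all cycles of length at most $r$, in the spirit of the classical Hildebrand--Tenenbaum treatment of smooth numbers adapted to the permutation setting. The starting point is the identity
\[
\sum_{n \geq 0} \nu(n,r)\, z^n \;=\; \exp\!\left( \sum_{k=1}^r \frac{z^k}{k} \right) \;=\; \frac{1}{1-z}\, \exp\!\left( -\sum_{k > r} \frac{z^k}{k} \right), \qquad |z|<1,
\]
from which Cauchy's formula gives $\nu(n,r)$ as a contour integral on $|z|=\xi$ for any $\xi \in (0,1)$. The factored form is useful because the pole at $z = 1$ is exhibited explicitly while the remaining exponential factor is smooth and bounded on circles of radius close to $1$.

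I would select the saddle point $\xi = \xi(n,r) \in (0,1)$ by requiring that the logarithmic derivative of the integrand vanish at $z = \xi$, which amounts to $\sum_{k=1}^r \xi^k = n$; a routine calculation then gives $1 - \xi \asymp u/r$ with $u = n/r$. Splitting the contour into a major arc of angular width $\Theta(r^{-1/2}/(1-\xi))$ around $\xi$ and its complement, a quadratic expansion on the major arc yields the Hayman-type asymptotic
\[
\nu(n,r) \;=\; \frac{\exp\!\left(\sum_{k=1}^r \xi^k/k \,-\, n\log\xi\right)}{\xi\sqrt{2\pi\sigma_r^2}}\, \Bigl(1 + O(r^{-1/2})\Bigr), \qquad \sigma_r^2 := \sum_{k=1}^r k\xi^k,
\]
while on the minor arc the bound $\mathrm{Re}\sum_{k \leq r} z^k/k < \sum_{k \leq r}\xi^k/k$, refined quantitatively using the imaginary parts of $z^k$ away from the real axis, controls the oscillatory tails.

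The identification of this saddle-point expression with $\rho(u)$ proceeds via a parallel inversion of the classical Laplace-transform representation of the Dickman function at the conjugate parameter $s = r(1-\xi) \asymp u$. The advertised error $O(u\log(u+1)/r)$ then arises through a careful Euler--Maclaurin comparison of the discrete sum $\sum_{k \leq r}\xi^k/k$ with the continuous incomplete exponential integral appearing in $\log\hat{\rho}$, together with the standard $O(r^{-1/2})$ next-order term in the Gaussian expansion; the factor $\log(u+1)$ comes from the logarithmic behaviour of that exponential integral near the origin, which is exactly the scale conjugate to $\xi$ approaching $1$.

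The main obstacle is uniform control across the entire range $\sqrt{n\log n} \leq r \leq n$, equivalently $1 \leq u \leq \sqrt{n/\log n}$. The lower bound on $r$ is precisely where the Gaussian width and the distance to the singularity $z = 1$ begin to compete, and where the minor-arc savings must quantitatively beat the $\log n$ factor appearing in the crude oscillatory bound; the hypothesis $r \geq \sqrt{n\log n}$ is exactly the threshold at which the advertised error stays bounded. Aside from this technical core, the remaining work is bookkeeping: propagating the error through the saddle-point matching and exhibiting it in the combined form $\rho(u)(1 + O(u\log(u+1)/r))$.
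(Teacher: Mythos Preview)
The paper does not supply a proof of this proposition: it is quoted verbatim from the cited reference (Theorem~4 of \cite{random_permutations_without_long_cycles}) and used as a black box throughout. There is therefore no ``paper's own proof'' to compare against.

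That said, your outline is the right kind of argument and is essentially what the cited source does. The exponential generating function identity is correct, the saddle-point choice $\sum_{k\le r}\xi^k=n$ giving $1-\xi\asymp u/r$ is the natural one, and the identification with $\rho(u)$ via the Laplace representation of the Dickman function is exactly the mechanism linking the discrete permutation problem to the continuous delay equation. Your diagnosis of where the hypothesis $r\ge\sqrt{n\log n}$ enters---as the threshold at which the minor-arc savings still dominate and the error $u\log(u+1)/r$ stays bounded---is also correct. As written this is a plausible sketch rather than a proof: the genuinely delicate steps are the uniform minor-arc bound across the full range of $u$ and the Euler--Maclaurin matching of $\sum_{k\le r}\xi^k/k$ with $\int_0^s(e^t-1)t^{-1}\,dt$ to the stated precision, and those would need to be written out in full to constitute a proof.
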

	\begin{rmk}
		It is worth noting that in Proposition \ref{s_n_r_size} the Big-O term does
    not approach $0$ if $r = \sqrt{n \log(n)}$; it is $\Theta(1)$. However,
    then $u = \sqrt{n / \log(n)}  \rightarrow \infty$ as $n \rightarrow
    \infty$, and  $\rho(u) \rightarrow 0$ rapidly. Indeed, it was shown in
    \cite{integers_without_large_prime_factors} that 
		\begin{align*}
			\rho(t) \leq \frac{1}{\Gamma(t + 1)}.
		\end{align*}
	\end{rmk}

	\begin{lemma} [\cite{xuan_integers_prime_factors}, Lemma 3] \label{dickman_ratio} 
		If $t \geq 1$ and $v > 0, v = O(1)$, then
		\begin{align*}
			\frac{\rho(t - v)}{\rho(t)} = e^{v \xi(t)} (1 + O(v/t))
		\end{align*}
		where $\xi \coloneqq \xi(t)$ is the positive solution to the equation
		\begin{align*}
			e^{\xi} = 1 + t \xi.
		\end{align*}
	\end{lemma}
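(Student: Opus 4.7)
The plan is to derive the estimate from the classical saddle-point asymptotic for the Dickman function. In the form most convenient here, this asymptotic reads
\[
\rho(t) = P(t)\,e^{g(t)}\bigl(1 + O(1/t)\bigr), \quad g(t) := \gamma - t\xi(t) + \int_0^{\xi(t)} \frac{e^s - 1}{s}\,ds,
\]
with $\xi(t)$ as in the statement and $P(t)$ a slowly varying prefactor of order $1/\sqrt{t}$. My first step is to simplify $g'(t)$. Implicit differentiation of $e^{\xi} = 1 + t\xi$ gives $\xi'(t)(e^{\xi} - t) = \xi$, hence $\xi'(t) = \xi(t)/(1 + (t-1)\xi(t)) = O(1/t)$. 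Because $e^{\xi} - 1 = t\xi$, the contributions from the variation of $\xi$ inside $g$ cancel exactly, leaving $g'(t) = -\xi(t)$.

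Next I would write
\[
g(t - v) - g(t) = \int_0^v \xi(t - s)\,ds
\]
and Taylor expand $\xi$ around $t$. Since $\xi'(t) = O(1/t)$, we have $\xi(t - s) = \xi(t) + O(s/t)$ uniformly for $s \in [0, v]$ with $v = O(1)$, so the integral equals $v\xi(t) + O(v^2/t)$. The ratio of prefactors satisfies $P(t - v)/P(t) = 1 + O(v/t)$, and the multiplicative $(1 + O(1/t))$ errors from the underlying asymptotic combine harmlessly. Exponentiating and using $v = O(1)$ to absorb $v^2/t$ into $v/t$ yields
\[
\frac{\rho(t - v)}{\rho(t)} = e^{v\xi(t)}\bigl(1 + O(v/t)\bigr),
\]
as claimed.

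The genuine obstacle is the input asymptotic itself: rigorously establishing $\rho(t) = P(t)\,e^{g(t)}(1 + O(1/t))$ requires either a steepest-descent analysis of the Laplace transform of $\rho$, whose logarithm is essentially $\int_0^{\xi}(e^s - 1)/s\,ds$, or an induction along the unit intervals $[k, k+1]$ using the delay equation $t\rho'(t) = -\rho(t - 1)$. Once that ingredient is granted, however, the lemma reduces to the short differentiation and Taylor-expansion calculation outlined above, and the only delicate algebraic point is the cancellation producing $g'(t) = -\xi(t)$. For this reason I would ultimately invoke the saddle-point asymptotic as a black box (as the paper does via its citation) rather than rederive it, and then carry out the two displayed computations to obtain the stated ratio.
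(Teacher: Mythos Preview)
The paper does not prove this lemma at all; it is quoted directly from the cited reference and used as a black box, so there is no in-paper argument to compare against. Your sketch is the standard route to such a ratio estimate, and the key identity $g'(t)=-\xi(t)$ is correct (there is a small slip in your implicit differentiation: $e^{\xi}-t=1+t(\xi-1)$, not $1+(t-1)\xi$, but the conclusion $\xi'(t)=O(1/t)$ is unaffected).

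There is, however, one genuine gap. When you form the ratio $\rho(t-v)/\rho(t)$ from the black-box asymptotic $\rho(t)=P(t)e^{g(t)}\bigl(1+O(1/t)\bigr)$, the two multiplicative error factors $\bigl(1+O(1/(t-v))\bigr)$ and $\bigl(1+O(1/t)\bigr)^{-1}$ combine only to $1+O(1/t)$, not to $1+O(v/t)$. Since the hypothesis allows $v>0$ to be arbitrarily small (only $v=O(1)$ is assumed), $O(1/t)$ is strictly weaker than the claimed $O(v/t)$; your phrase ``combine harmlessly'' hides exactly this point. To recover the stated error you need more than the leading term: either invoke the full asymptotic expansion so that the error function itself is smooth with derivative $O(1/t^{2})$, giving a difference of errors of size $O(v/t^{2})$, or argue directly from the delay equation $t\rho'(t)=-\rho(t-1)$ with a bootstrap, which is closer to how the cited source proceeds. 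With that refinement supplied, your outline becomes a complete proof.
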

	
	It was shown in \cite{random_permutations_without_long_cycles} that if $t
  > 1$ then $\log(t) < \xi(t) \leq 2 \log(t)$.
	
	\begin{lemma} \label{nu_ratio}
		If $ \sqrt{n \log(n)} \leq r \leq n$,
		\begin{align*}
			\frac{\nu(n - k, r)}{\nu(n, r)} 
			= e^{\frac{k}{r} \xi(u)} \left(1 + O \left( \frac{u \log(u + 1)}{r} \right)   \right).
		\end{align*}
		In particular,
		\begin{align*}
			\frac{\nu(n - k, r)}{\nu(n, r)}  = 1 + O \left( \frac{\xi(u)k + u \log(u + 1)}{r} \right).
		\end{align*}
	\end{lemma}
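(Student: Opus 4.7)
The plan is to reduce the ratio to one involving the Dickman function via Proposition~\ref{s_n_r_size}, then apply Lemma~\ref{dickman_ratio} to estimate $\rho(u-k/r)/\rho(u)$. Setting $u' = (n-k)/r = u - k/r$ and applying Proposition~\ref{s_n_r_size} to both $\nu(n-k,r)$ and $\nu(n,r)$ gives
\[
\frac{\nu(n-k,r)}{\nu(n,r)} = \frac{\rho(u - k/r)}{\rho(u)} \cdot \frac{1 + O(u'\log(u'+1)/r)}{1 + O(u\log(u+1)/r)}.
\]
Since $u' \leq u$, the numerator's error factor is dominated by the denominator's, and the second factor simplifies to $1 + O(u\log(u+1)/r)$.

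Next I would invoke Lemma~\ref{dickman_ratio} with $t = u$ and $v = k/r$; this requires $u \geq 1$ (which holds since $r \leq n$) and $v = O(1)$ (which holds in the range of $k$ needed for Theorem~\ref{main_theorem}). The lemma then yields
\[
\frac{\rho(u - k/r)}{\rho(u)} = e^{(k/r)\xi(u)}\bigl(1 + O(k/(ur))\bigr) = e^{(k/r)\xi(u)}\bigl(1 + O(k/n)\bigr).
\]
Multiplying by the factor from the previous step and absorbing the $O(k/n)$ error into $O(u\log(u+1)/r)$ yields the first claimed identity. For the ``in particular'' statement, I would expand $e^{(k/r)\xi(u)} = 1 + O(k\xi(u)/r)$ (valid when the exponent is $O(1)$, which follows from $\xi(u) \leq 2\log u$ together with the effective bound on $k$), multiply by $1 + O(u\log(u+1)/r)$, and collect error terms to obtain $1 + O((k\xi(u) + u\log(u+1))/r)$.

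The main obstacle will be the careful bookkeeping of these multiplicative errors. In the borderline regime $r \approx \sqrt{n\log n}$, the quantity $u\log(u+1)/r$ is only $\Theta(1)$ rather than $o(1)$, as the remark following Proposition~\ref{s_n_r_size} notes, so one must choose the implicit constants so that $1 + O(u\log(u+1)/r)$ is bounded away from zero before dividing, and then verify that the $O(k/n)$ error coming from Lemma~\ref{dickman_ratio} is genuinely absorbed into $O(u\log(u+1)/r)$ in the range of $k$ relevant for the subsequent application to Theorem~\ref{main_theorem}.
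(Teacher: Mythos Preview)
Your proposal is correct and follows essentially the same route as the paper: apply Proposition~\ref{s_n_r_size} to numerator and denominator, use Lemma~\ref{dickman_ratio} with $t=u$ and $v=k/r$ to turn $\rho(u-k/r)/\rho(u)$ into $e^{(k/r)\xi(u)}(1+O(k/n))$, combine the multiplicative errors, and then Taylor-expand the exponential for the second display. Your remarks about the borderline regime and about absorbing the $O(k/n)$ term are exactly the bookkeeping the paper carries out (somewhat implicitly).
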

	\begin{proof}
		
		Recall that $u = n / r$. By Proposition \ref{s_n_r_size},
		\begin{align*}
			\frac{\nu(n - k, r)}{\nu(n, r)} 
			= \frac{\rho \left(u - \frac{k}{r} \right) \left(1 + O \left( \frac{(u - (k / r)) \log(u - (k / r) + 1)}{r} \right)   \right)}{\rho(u) \left(1 + O \left( \frac{u \log(u + 1)}{r} \right)   \right)}.
		\end{align*}
		Applying Lemma \ref{dickman_ratio} yields 
		\begin{align*}
			\frac{\nu(n - k, r)}{\nu(n, r)} 
			&= e^{\frac{k}{r} \xi(u)} \left(1 + O\left(\frac{k}{ru} \right)\right)
			\frac{\left(1 + O \left( \frac{(u - (k / r)) \log(u - (k / r) + 1)}{r} \right)   \right)}{\left(1 + O \left( \frac{u \log(u + 1)}{r} \right)   \right)} \\
			&= e^{\frac{k}{r} \xi(u)} \left(1 + O\left(\frac{k}{n} \right)\right) \left(1 + O \left( \frac{u \log(u + 1)}{r} \right)   \right).
		\end{align*}
		Expanding $\exp \left\{ \frac{k}{r} \xi(u) \right\}$ in a Taylor series gives that
		\begin{align*}
			\frac{\nu(n - k, r)}{\nu(n, r)} 
			&= \left(1 + \frac{\xi k / r}{1!} + \frac{\xi^2 (k / r)^2}{2!} + \cdots  \right) \left(1 + O\left(\frac{k}{n} \right)\right) \left(1 + O \left( \frac{u \log(u + 1)}{r} \right) \right) \\
			&= 1 + O  \left(\frac{\xi(u)k}{r} + \frac{u \log(u + 1)}{r} \right)
		\end{align*}
		completing the proof.
	\end{proof}

	\section{Results}
	
	Let $C_a$ denote the cycle containing $a \in \{1,..., n\}$ and let $L(C_a)$ be its length. 
	
	\begin{lemma} \label{cycle_containing_element_length}
		Let $a \in \{1,2,..., n\}$. Then for $\sigma$ chosen uniformly from $S_n^r$,
		\begin{align*}
			\P[L(C_a) = k] = \frac{1}{n} \left(1 + O \left(\frac{\xi(u)k + u \log(u + 1)}{r} \right) \right).
		\end{align*}
	\end{lemma}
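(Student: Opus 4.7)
The plan is to compute $\P[L(C_a) = k]$ by a direct counting argument and then invoke Lemma \ref{nu_ratio}. First I would fix $a \in \{1, \ldots, n\}$ and an integer $1 \leq k \leq r$ (the event is empty for $k > r$), and enumerate the permutations $\sigma \in S_n^r$ with $L(C_a) = k$ as follows: choose the $k-1$ other elements of the cycle containing $a$ in $\binom{n-1}{k-1}$ ways, arrange them together with $a$ into a cycle in $(k-1)!$ ways, and then choose any permutation of the remaining $n - k$ elements that itself has no cycle exceeding $r$, giving a factor of $|S_{n-k}^r|$. By independence of these three choices, the total count is
\begin{align*}
\binom{n-1}{k-1}(k-1)! \, |S_{n-k}^r| = \frac{(n-1)!}{(n-k)!} \, |S_{n-k}^r|.
\end{align*}

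Next I would divide by $|S_n^r|$ and rewrite the ratio in terms of $\nu$. Using $|S_m^r| = \nu(m,r) \cdot m!$, the probability collapses to
\begin{align*}
\P[L(C_a) = k] = \frac{(n-1)!}{(n-k)!} \cdot \frac{\nu(n-k,r)(n-k)!}{\nu(n,r)\, n!} = \frac{1}{n} \cdot \frac{\nu(n-k,r)}{\nu(n,r)}.
\end{align*}

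Finally I would apply the second form of Lemma \ref{nu_ratio} to the ratio $\nu(n-k,r)/\nu(n,r)$, which immediately yields
\begin{align*}
\P[L(C_a) = k] = \frac{1}{n}\left(1 + O\left(\frac{\xi(u) k + u \log(u+1)}{r}\right)\right),
\end{align*}
as claimed. The argument is essentially bookkeeping: the combinatorial count and the ratio cancellation are completely mechanical, and all of the analytic content has already been packaged into Lemma \ref{nu_ratio}. The only mildly delicate point is noting that the hypothesis $k \leq r$ is needed for the count to be non-vacuous and for the asymptotic estimate to be meaningful; since the lemma will be applied below with $k \leq d \ll r$, this causes no difficulty.
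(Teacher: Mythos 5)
Your proof is correct and follows essentially the same route as the paper: the same combinatorial count $\binom{n-1}{k-1}(k-1)!\,|S_{n-k}^r|$, the same reduction to $\frac{1}{n}\cdot\frac{\nu(n-k,r)}{\nu(n,r)}$, and then an appeal to Lemma~\ref{nu_ratio}. The remark about needing $k \leq r$ is a harmless extra observation that the paper leaves implicit.
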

	\begin{proof}
		There are ${n - 1 \choose k - 1}$ ways to select the remaining elements to be
		in the cycle $C_a$, $(k - 1)!$ cycles that can be formed from these $k$
		elements, and $|S_{n - k}^r|$ ways to permute the remaining elements.
		Dividing by $|S_n^r|$ yields
		\begin{align*}
			\frac{ {n - 1 \choose k - 1} (k - 1)! (n - k)! \nu(n - k, r) }{n! \nu(n, r)} 
			= \frac{1}{n} \left(1 + O \left(\frac{\xi(u)k + u \log(u + 1)}{r} \right) \right)
		\end{align*}
		by Lemma \ref{nu_ratio}.
	\end{proof}
	
	\begin{prop} \label{expected_value}
		Let $\sigma$ be a uniformly chosen permutation from $S_n^r$ and let $W$ be
    the number of $k$-cycles in $\sigma.$ Then
		\begin{align*}
			\E[W] = \frac{1}{k} \left(1 + O \left(\frac{\xi(u)k + u \log(u + 1)}{r} \right) \right).
		\end{align*}
	\end{prop}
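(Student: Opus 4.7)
The plan is to reduce the expected value computation to the single-element cycle-length distribution already established in Lemma \ref{cycle_containing_element_length}. The key observation is that a $k$-cycle contains exactly $k$ elements, so each $k$-cycle of $\sigma$ is counted $k$ times in the sum $\sum_{a=1}^{n} \ind\{L(C_a) = k\}$. Therefore I would write
\[
W = \frac{1}{k} \sum_{a=1}^{n} \ind\{L(C_a) = k\}
\]
and take expectations, using linearity to obtain
\[
\E[W] = \frac{1}{k} \sum_{a=1}^{n} \P[L(C_a) = k].
\]
By Lemma \ref{cycle_containing_element_length}, each summand equals $\frac{1}{n}\bigl(1 + O((\xi(u)k + u\log(u+1))/r)\bigr)$, and summing the $n$ identical terms gives the claimed expression, with the $n$ and $1/n$ cancelling to leave the factor $1/k$ out front.

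As an alternative (and as a sanity check that the error term is not lost in the averaging), one could instead count $k$-cycles directly: there are $\binom{n}{k}$ ways to choose the support of the cycle, $(k-1)!$ cyclic orderings on that support, and $|S_{n-k}^r|$ ways to permute the remaining $n-k$ elements subject to the length constraint, which is preserved since we are removing a cycle of length $k \leq r$. Dividing by $|S_n^r|$ yields $\E[W] = \frac{1}{k}\cdot\frac{\nu(n-k,r)}{\nu(n,r)}$, and Lemma \ref{nu_ratio} immediately gives the stated asymptotic. Either route avoids any serious obstacle: the work has already been done in the preceding lemmas, and the proposition is essentially a one-line consequence. The only mild point to note is that the bound on $\P[L(C_a) = k]$ from Lemma \ref{cycle_containing_element_length} is uniform in $a$, so summing preserves the big-$O$ error term without picking up an extra factor of $n$.
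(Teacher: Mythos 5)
Your primary argument (writing $W = \tfrac{1}{k}\sum_a \ind\{L(C_a)=k\}$ and applying Lemma~\ref{cycle_containing_element_length} via linearity) is exactly the method-of-indicators proof the paper gives, since the paper's $X_i$ is precisely $\ind\{L(C_i)=k\}$. The proof is correct, and the direct-count alternative you sketch is also fine and consistent, but the main route matches the paper's.
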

	\begin{proof} 
		Let 
		\[
			X_i(\sigma) = \begin{cases}
				1 & i \text{ is contained in a } k \text{-cycle} \\
				0 & i \text{ is not contained in a } k \text{-cycle}
			\end{cases}
		\]
		for $1 \leq i \leq n$.
		Because the $X_i$ are identically distributed for each $i \in
    \{1,2,...,n\}$, by the method of indicators and Lemma
    \ref{cycle_containing_element_length}
		\begin{align*} 
			\E W 
			&= \frac{1}{k} \sum_{i = 1}^{n} \E X_i
			= \frac{n}{k} \E X_1
			= \frac{1}{k} \left(1 + O \left(\frac{\xi(u)k + u \log(u + 1)}{r} \right) \right).
			\qedhere
		\end{align*} 
	\end{proof}
	
	The proof of Theorem \ref{main_theorem} uses the following multivariate
  version of Stein's method of exchangeable pairs.
	
	\begin{thm}[\cite{exchangeable_pairs_poisson}, Proposition 10] \label{stein_multivariate}
		Let $W = (W_1, W_2,..., W_d)$ with $W_i$ a non-negative integer valued
    random variable. Let $Y = (Y_1, Y_2, ..., Y_d)$ have independent
    coordinates with $Y_i$ a Poisson random variable with mean $\lambda_i$. Let
    $W' = (W_1', W_2', ..., W_d')$ be defined on the same probability space as
    $W$ with $(W, W')$ an exchangeable pair. Then
		\begin{align*}
		\| \mathcal{L}(W) - \mathcal{L}(Y) \|_{TV} \leq \sum_{k = 1}^{d} \frac{\alpha_k}{2}
		\Big( \E \big|\lambda_k - c_k \P[A_k] \big| + \E \big| W_k - c_k \P[B_k] \big|  \Big)
		\end{align*}
		with $\alpha_k = \min \{1, 1.4 \lambda^{-1/2} \}$, $c_k > 0$ for each $k$, and
		\begin{align*}
		A_k &= \{ W_k' = W_k + 1, W_j = W_j' \text{ for } k + 1 \leq j \leq d \}, \\
		B_k &= \{ W_k' = W_k - 1, W_j = W_j' \text{ for } k + 1 \leq j \leq d \}.
		\end{align*}
	\end{thm}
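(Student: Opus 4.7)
The plan is to follow the standard template for multivariate Poisson approximation via exchangeable pairs, combining a coordinate-by-coordinate telescoping with the univariate Stein equation. First I would reduce to the univariate problem by writing, for any bounded test function $h:\Z_{\geq 0}^d \to \R$,
\begin{align*}
\E h(W) - \E h(Y) = \sum_{k=1}^{d}\Big(\E h(Y_1,\dots,Y_{k-1},W_k,\dots,W_d) - \E h(Y_1,\dots,Y_k,W_{k+1},\dots,W_d)\Big),
\end{align*}
and then, at step $k$, conditioning on $(W_{k+1},\dots,W_d)$ and using independence of the $Y_i$ to view the $k$-th term as a univariate comparison between $\L(W_k\mid W_{k+1},\dots,W_d)$ and $\mathrm{Poi}(\lambda_k)$. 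Since $\|\L(W)-\L(Y)\|_{TV} = \tfrac{1}{2}\sup_{\|h\|_\infty \leq 1}|\E h(W)-\E h(Y)|$, this produces the outer sum over $k$ with the factor $\tfrac{1}{2}$.

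Next I would handle each univariate step by invoking the Poisson Stein equation. For the bounded test function $h_k$ arising at step $k$, let $f=f_{h_k}$ solve $\lambda_k f(x+1)-xf(x) = h_k(x) - \E h_k(Y_k)$; the Barbour--Eagleson bound gives $\|f\|_\infty \leq \alpha_k\|h_k\|_\infty$ with $\alpha_k = \min\{1,1.4\lambda_k^{-1/2}\}$. Taking expectations yields
\begin{align*}
\big|\E h_k(W_k)-\E h_k(Y_k)\big| \leq \alpha_k\|h_k\|_\infty\,\Big|\E\big[\lambda_k f(W_k+1)-W_k f(W_k)\big]\Big|,
\end{align*}
so the remaining task is to bound the right-hand expectation using the exchangeable pair.

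The heart of the proof is the exchangeable-pair identity: for any bounded antisymmetric $G$, $\E G(W,W')=0$. Applying this with $G(W,W')=f(W_k+1)\mathbbm{1}_{A_k}(W,W') - f(W_k')\mathbbm{1}_{A_k}(W',W)$, and using that on $A_k$ one has $W_k'=W_k+1$ together with $W_j=W_j'$ for $j>k$, one obtains the identity $c_k\E[f(W_k+1)\mathbbm{1}_{A_k}] = c_k\E[f(W_k')\mathbbm{1}_{A_k}(W',W)]$, which after adding and subtracting $\E[\lambda_k f(W_k+1)]$ gives
\begin{align*}
\big|\E[\lambda_k f(W_k+1)] - c_k\E[f(W_k+1)\mathbbm{1}_{A_k}]\big| \leq \|f\|_\infty\,\E\big|\lambda_k - c_k\P[A_k\mid\mathcal{F}_k]\big|,
\end{align*}
where $\mathcal{F}_k$ is the appropriate conditioning $\sigma$-algebra. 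A parallel argument for $\E[W_k f(W_k)]$ using the event $B_k$ (where $W_k'=W_k-1$) produces $\E|W_k - c_k\P[B_k\mid\mathcal{F}_k]|$. Summing and using the triangle inequality, then taking the supremum over $\|h_k\|_\infty\leq 1$ to collapse the conditional bounds to unconditional ones via Jensen, yields the stated inequality; the free parameter $c_k$ reflects the arbitrary normalization of the "forward" and "backward" jump rates in the exchangeable pair.

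The main obstacle is the careful bookkeeping of the conditioning: the events $A_k,B_k$ constrain only the coordinates $j>k$, so the exchangeable-pair identity must be applied at each level of the telescoping in a way that is compatible with which coordinates have already been replaced by their Poisson counterparts. A secondary subtlety is that the Stein solution $f$ depends on the already-replaced coordinates $Y_1,\dots,Y_{k-1}$, but since these are independent of $W$, one can pull the $Y$-expectation outside and apply the $L^\infty$ bound on $f$ uniformly.
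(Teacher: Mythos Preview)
The paper does not prove this statement; it is quoted from \cite{exchangeable_pairs_poisson} (Proposition 10) and used as a black box in the proof of Theorem~\ref{main_theorem}. So there is no proof in the paper to compare your sketch against.

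For what it is worth, your outline is the standard derivation and is essentially correct: the Lindeberg-style telescoping reduces the problem to a univariate comparison at each coordinate, the Barbour--Eagleson bound $\|f\|_\infty \le \alpha_k$ controls the Stein solution, and the exchangeable-pair identity $\E[f(W_k+1)\ind_{A_k}] = \E[f(W_k)\ind_{B_k}]$ (valid because swapping $W \leftrightarrow W'$ maps $A_k$ to $B_k$ while leaving the coordinates $W_j$ for $j>k$, and hence the Stein solution $f$, unchanged) does the rest. One small clarification: the symbol $\P[A_k]$ in the statement denotes the conditional probability given $W$ (in the paper's application, given the permutation $\sigma$), so your $\mathcal{F}_k$ should be generated by all of $W$ rather than only the frozen coordinates $(W_{k+1},\dots,W_d)$.
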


	\begin{rmk}
		Although the estimate of Proposition \ref{expected_value} is only correct
    up to a constant factor if $r = \sqrt{n \log(n)}$, we nevertheless take
    $\lambda_k = 1/k$ in what follows. The remainder of the proof shows that
    this is indeed the correct parameter.
	\end{rmk}

	Construct an exchangeable pair $(\sigma, \sigma')$ by choosing $\sigma$
  uniformly from $S_n$, choosing a transposition $\tau$ uniformly from $S_n$
  and independently of $\sigma$, and setting
	\[
		\sigma' =
		\begin{cases}
		\tau \circ \sigma & \tau \circ \sigma \in S_n^r .\\
		\sigma & \text{otherwise}.
		\end{cases}
	\]
	Write $W' = W(\sigma')$.
	
	\begin{lemma} \label{multivariate_plus}
		For the random variables $W, W'$ as defined above, 
		\begin{align}
			\P[A_k] 
			&= \frac{2}{n - 1} + \frac{2}{n(n - 1)} \sum_{a = 1}^{n} \big( -\ind_{L(C_a) \leq d + k} + \ind_{d < L(C_a) < 2k} \big) \nonumber \\
			&\qquad \qquad \qquad \quad  + \frac{1}{n(n - 1)} \sum_{a = 1}^{n}  \sum_{b \neq a}  \ind_{C_a \neq C_b} \ind_{L(C_a) + L(C_b) = k} . \label{claim01}
		\end{align}
		If $\lambda_k = 1/k$ and $c_k = n / 2k$, then
		\begin{align}
			\E \big|\lambda_k - c_k \P[A_k] \big| 
			\leq \frac{1}{2(n - k)} + \frac{d + 3k + 1}{k(n - 1)} + O \left(\frac{\xi(u)k + u \log(u + 1)}{nr} \right).
		\end{align}
	\end{lemma}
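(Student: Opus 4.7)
The proof naturally splits into deriving \eqref{claim01} by case analysis on the random transposition $\tau=\{a,b\}$, and then bounding $\E|\lambda_k - c_k\P[A_k]|$ by the triangle inequality. First observe that whenever $\tau\circ\sigma$ contributes to $A_k$ it automatically lies in $S_n^r$: in the merge case the new cycle has length $k\leq d\leq r$, and in the split case the new cycles are shorter than $L(C_a)\leq r$. If $a,b$ lie in different cycles (the \emph{merge} case), then $\tau\circ\sigma$ fuses them into a cycle of length $L(C_a)+L(C_b)$; for $A_k$ this forces $L(C_a)+L(C_b)=k$, which automatically leaves $W_j$ unchanged for $j>k$ since $L(C_a),L(C_b)<k$. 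Each unordered transposition has probability $2/(n(n-1))$, so converting to a sum over ordered pairs yields the third term of \eqref{claim01}.

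If instead $b\in C_a$ with $L(C_a)=\ell$, writing $b=\sigma^i(a)$ for some $1\leq i\leq\ell-1$ shows that $\tau\circ\sigma$ replaces $C_a$ by two cycles of lengths $i$ and $\ell-i$. Requiring $W_k$ to increase by exactly $1$ forces exactly one of $i,\ell-i$ to equal $k$, so $b\in\{\sigma^k(a),\sigma^{\ell-k}(a)\}$ (two valid choices per $a$) and $\ell\neq 2k$. Requiring $W_j$ unchanged for $k<j\leq d$ forces both $\ell$ and $\ell-k$ to avoid $(k,d]$, which, since $\ell>k$, simplifies to $\ell>d$ together with $\ell\leq 2k$ or $\ell>d+k$; equivalently, $\ell\in(d,2k)\cup(d+k,\infty)$. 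Using $\ind_{L(C_a)>d+k}=1-\ind_{L(C_a)\leq d+k}$ to absorb the second interval produces the $2/(n-1)$ constant and the two signed indicator sums in \eqref{claim01}.

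For the bound, substituting \eqref{claim01} and using $\frac{1}{k}-\frac{n}{k(n-1)}=-\frac{1}{k(n-1)}$ gives
\[
\lambda_k - c_k\P[A_k] = \frac{1}{k(n-1)}\left[-1 + \sum_a\ind_{L(C_a)\leq d+k} - \sum_a\ind_{d<L(C_a)<2k} - \tfrac12\sum_a\sum_{b\neq a}\ind_{C_a\neq C_b}\ind_{L(C_a)+L(C_b)=k}\right].
\]
Applying the triangle inequality inside $\E|\cdot|$ reduces the problem to four nonnegative expectations. Lemma \ref{cycle_containing_element_length} directly gives $\E\sum_a\ind_{L(C_a)\leq d+k}$ equal to $d+k$ up to an error of the stated order, and (since $k\leq d$) $\E\sum_a\ind_{d<L(C_a)<2k}\leq k-1$ up to the same order. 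For the double sum, a direct count analogous to Lemma \ref{cycle_containing_element_length}---placing $a,b$ in disjoint cycles of lengths $j_1,j_2$ and permuting the remaining $n-k$ elements in $S_{n-k}^r$---yields
\[
\P[L(C_a)=j_1,\,L(C_b)=j_2,\,C_a\neq C_b] = \frac{(n-2)!\,\nu(n-k,r)}{n!\,\nu(n,r)}
\]
for each of the $k-1$ ordered splits of $k$, so the double sum has expectation $(k-1)\nu(n-k,r)/\nu(n,r)$; Lemma \ref{nu_ratio} then controls the ratio. Dividing by $2k(n-1)$ and using $\frac{k-1}{2k(n-1)}\leq\frac{1}{2(n-k)}$ produces the $1/(2(n-k))$ term, while the single-sum and constant contributions combine into $\frac{1+(d+k)+(k-1)}{k(n-1)}\leq\frac{d+3k+1}{k(n-1)}$. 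The main technical step is the case analysis for the split---pinning down $\ell\in(d,2k)\cup(d+k,\infty)$ and the factor-of-two count of valid $b$ per $a$; once this is done the rest is routine bookkeeping combined with the cited lemmas.
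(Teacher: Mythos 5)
Your proof is correct and follows the same overall strategy as the paper: first derive \eqref{claim01} by a case analysis on whether the transposition merges two cycles or splits one, then apply the triangle inequality and Lemma \ref{cycle_containing_element_length}. The one place you genuinely deviate is the double-sum term: you compute its expectation exactly by the combinatorial count $\P[L(C_a)=j_1,L(C_b)=j_2,C_a\neq C_b] = (n-2)!\,\nu(n-k,r)/(n!\,\nu(n,r))$, summing over the $k-1$ ordered splits to get $(k-1)\nu(n-k,r)/\nu(n,r)$ and then invoking Lemma \ref{nu_ratio}; the paper instead conditions on $\{C_a\neq C_b,\ L(C_a)=j\}$, estimates $\P[L(C_b)=k-j\mid\cdot]$ via Lemma \ref{cycle_containing_element_length}, and bounds the product by $\tfrac{1}{n(n-k)}(1+O(\cdot))$. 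Your route is a bit cleaner since it avoids reasoning about the conditional law of the unconstrained elements and gives the constant $(k-1)$ exactly. Two other small points in your favor: you make explicit that $\tau\circ\sigma\in S_n^r$ in every contributing case (the paper treats this silently), and your count $\E\sum_a\ind_{d<L(C_a)<2k}\leq k-1$ (using $k\leq d$) is tighter than what the paper implicitly uses, so you land at $d+2k$ in the numerator instead of the paper's $d+3k$, both dominated by the stated $d+3k+1$.
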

	\begin{proof}
		We count the number of transpositions $\tau = (ab)$ such that the event $A_k$ occurs.
		Suppose first that $a, b$ are contained in different cycles in the cycle
    structure of $\sigma$. Then the number of $k$-cycles will increase by 1 if
    $L(C_a) + L(C_b) = k$.
		
		Suppose now that $a, b$ are contained in the same cycle in the cycle
    structure of $\sigma$ and that $L(C_a) > k$. There are $L(C_a)$
    transpositions that will break $C_a$ into two cycles, one of which has
    length $k$. Note, however, that if $L(C_a) \in \{k + 1, k + 2, ..., d\}$
    and $C_a$ is broken into two cycles, then $W'_{L(C_a)} = W_{L(C_a)} - 1$.
    Also, if $L(C_a) \in \{ 2k + 1, 2k + 2, ..., d + k \}$ then 
		$W'_{L(C_a) - k} = W_{L(C_a) - k} + 1$. Furthermore, if $L(C_a) = 2k$ then
    $W_k' = W_k + 2$.
		Putting this together yields
		\begin{align*}
			\P[A_k] = 
			\frac{2}{n(n - 1)}& \sum_{a = 1}^{n} \big( \ind_{L(C_a) > d + k} + \ind_{d < L(C_a) < 2k} \big)  \\
			&\qquad \qquad + \frac{1}{n(n - 1)} \sum_{a = 1}^{n}  \sum_{b \neq a}  \ind_{C_a \neq C_b} \ind_{L(C_a) + L(C_b) = k}.
		\end{align*}
		Equation \eqref{claim01} then follows by writing
		$
			\ind_{L(C_a) > d + k} = 1 - \ind_{L(C_a) \leq d + k}.
		$
		
		By the triangle inequality,
		\begin{align*}
			\E \big|\lambda_k - c_k \P[A_k] \big| &\leq \frac{1}{k(n - 1)}
			+ \E \Big[ \frac{1}{2k(n - 1)} \sum_{a = 1}^{n}  \sum_{b \neq a}  \ind_{C_a \neq C_b} \ind_{L(C_a) + L(C_b) = k} \Big] \\ 
			& \quad + \E \Big[ \frac{1}{k(n - 1)} \sum_{a = 1}^{n} \ind_{L(C_a) \leq d + k} + \ind_{d < L(C_a) < 2k} \Big].
		\end{align*}
		For the first expectation we have
		\begin{align}
			&\E \Big[ \frac{1}{2k(n - 1)} \sum_{a = 1}^{n} \sum_{b \neq a} \ind_{C_a \neq C_b} \ind_{L(C_a) + L(C_b) = k} \Big]  \nonumber \\
			&= \E \Big[ \frac{1}{2k(n - 1)} \sum_{a = 1}^{n} \sum_{b \neq a} \sum_{j = 1}^{k - 1} \ind_{C_a \neq C_b} \ind_{L(C_a) = j} \ind_{L(C_b) = k - j} \Big] \nonumber \\
			&= \frac{1}{2k(n - 1)} \sum_{a = 1}^{n} \sum_{b \neq a} \sum_{j = 1}^{k - 1} \P[L(C_b) = k - j | C_a \neq C_b, L(C_a) = j ] \P[L(C_a) = j, C_a \neq C_b] \nonumber \\
			&\leq \frac{1}{2k(n - 1)} \sum_{a = 1}^{n} \sum_{b \neq a} \sum_{j = 1}^{k - 1} \P[L(C_b) = k - j | C_a \neq C_b, L(C_a) = j ] \P[L(C_a) = j]. \label{conditional_prob}
		\end{align}

		Applying Lemma \ref{cycle_containing_element_length} to \eqref{conditional_prob} yields
		\begin{align*}
			\E \Big[ \frac{1}{2k(n - 1)}& \sum_{a = 1}^{n} \sum_{b \neq a} \ind_{C_a \neq C_b} \ind_{L(C_a) + L(C_b) = k} \Big]  \\
			&\leq \frac{1}{2k(n - 1)} \sum_{a = 1}^{n} \sum_{b \neq a} \sum_{j = 1}^{k - 1} \Bigg[ \frac{1}{n(n - k)} \left( 1 + O \left(\frac{\xi(u)k + u \log(u + 1)}{r} \right) \right)  \Bigg]  \\ 
			&\leq \frac{1}{2(n - k)} \left( 1 + O \left(\frac{\xi(u)k + u \log(u + 1)}{r} \right) \right). 
		\end{align*}
		Furthermore, by Lemma \ref{cycle_containing_element_length}, 
		\begin{align*}
			\E \Big[ \frac{1}{k(n - 1)}& \sum_{a = 1}^{n} \ind_{L(C_a) \leq d + k} + \ind_{d < L(C_a) < 2k} \Big] \\  
			&\qquad \qquad \quad \leq \frac{d + 3k}{k(n - 1)} \left( 1 + O \left(\frac{\xi(u)k + u \log(u + 1)}{r} \right) \right).
			\qedhere
		\end{align*}
	\end{proof}
	
	\begin{lemma} \label{multivariate_minus}
		For the random variables $W, W'$ as defined above, 
		\begin{align}
		\P[B_k] = &W_k - \frac{2}{n(n - 1)} \sum_{a = 1}^n \sum_{b \neq a} \big(-\ind_{L(C_a) = k} \ind_{L(C_b) \leq d}
		- \ind_{L(C_a) = k} \ind_{L(C_b) > r - k} \nonumber \\
		&\qquad \qquad\quad + \ind_{L(C_a) = k} \ind_{L(C_b) < k} \ind_{L(C_b) > d - k}   \big)  
		+ \frac{k - 1}{n (n - 1)} \sum_{a = 1}^{n} \ind_{L(C_a) = k }. \label{eqqq}
		\end{align}
		If $c_k = n / 2k$, then
		\begin{align}
		\E \big|W_k - c_k \P[B_k] \big| 
		\leq \frac{d + k - 1}{k (n - k)}  + \frac{4}{n - k} + O \left(\frac{\xi(u)k + u \log(u + 1)}{nr} \right)
		\end{align}
	\end{lemma}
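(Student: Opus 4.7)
My plan is to derive the identity \eqref{eqqq} by conditioning on $\sigma$ and counting transpositions $\tau = (ab)$ contributing to $B_k$, and then to bound $\E|W_k - c_k\P[B_k]|$ via the triangle inequality together with joint cycle-probability estimates.

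For the identity, I would split the contributing transpositions into \emph{splits} of a single $k$-cycle and \emph{merges} of a $k$-cycle with another cycle. Every unordered pair of elements from a $k$-cycle splits it into two strictly shorter cycles and contributes to $B_k$, giving $\binom{k}{2}W_k$ transpositions and thus the term $\frac{k-1}{n(n-1)}\sum_a\ind_{L(C_a)=k}$ via $\sum_a\ind_{L(C_a)=k}=kW_k$. For a merge with $L(C_a)=k$, the requirements that (i) $\tau\circ\sigma\in S_n^r$, (ii) $W_k$ decrease by exactly $1$ (forcing $L(C_b)\neq k$), and (iii) no $W_j$ with $k+1\leq j\leq d$ changes (forcing $L(C_b)\notin[k+1,d]$ and $L(C_b)>d-k$) together restrict $L(C_b)$ to $(d-k,k)\cup(d,r-k]$, whose indicator can be written as $1-\ind_{L_b\leq d}-\ind_{L_b>r-k}+\ind_{d-k<L_b<k}$. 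Summing over these admissible ordered pairs and collecting yields \eqref{eqqq}.

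With $c_k=n/(2k)$, the identity is arranged so that $W_k-c_k\P[B_k]$ reduces to a signed combination of the correction sums in \eqref{eqqq} with no residual $W_k$ contribution. Since $k\leq d$ forces $\ind_{d-k<L_b<k}\leq\ind_{L_b\leq d}$, the relevant bracketed combination stays nonnegative, and the triangle inequality (dropping the negative subtraction) gives
\begin{align*}
|W_k - c_k\P[B_k]| \leq \frac{k-1}{2k(n-1)}\sum_a \ind_{L(C_a)=k} + \frac{1}{k(n-1)}\sum_a \sum_{b\neq a} \ind_{L(C_a)=k}\bigl(\ind_{L(C_b)\leq d} + \ind_{L(C_b)>r-k}\bigr).
\end{align*}
Taking expectations and using exchangeability of indices reduces the problem to the joint probabilities $\P[L(C_a)=k,\,L(C_b)\in I]$ for $a\neq b$, which split into a same-cycle contribution $\frac{k-1}{n(n-1)}(1+O(\cdot))$ (present only when $k\in I$) and a different-cycle contribution of order $\frac{|I|}{n(n-k)}(1+O(\cdot))$. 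Plugging $|I|=d$ and $|I|=k$ respectively and collecting terms reproduces the leading piece $\frac{d+k-1}{k(n-k)}+\frac{4}{n-k}$ modulo the stated $O$-term.

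The main obstacle will be the two-element joint probability for $C_a\neq C_b$. Counting permutations of $S_n^r$ with disjoint cycles of length $k$ through $a$ and length $j$ through $b$ yields a count proportional to $\nu(n-k-j,r)$, so the joint probability equals $\frac{1}{n(n-k)}\cdot\nu(n-k-j,r)/\nu(n,r)$. Controlling this ratio uniformly in $j\leq d$ via Lemma \ref{nu_ratio} is what produces the advertised $(\xi(u)k+u\log(u+1))/r$ error, and the main care is ensuring the $O$-term in the ratio is absorbed without degrading the leading constants in the bound.
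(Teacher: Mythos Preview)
Your approach is essentially the paper's: the same split/merge case analysis on the transposition $\tau=(ab)$ yields the identity \eqref{eqqq}, and the bound follows by the same triangle-inequality reduction to joint cycle-length probabilities, handled exactly as you describe via the ratio $\nu(n-k-j,r)/\nu(n,r)$ (the paper phrases this as applying Lemma~\ref{cycle_containing_element_length} conditionally). Your one deviation---using $\ind_{d-k<L(C_b)<k}\le\ind_{L(C_b)\le d}$ to drop that correction term outright rather than bounding it separately by $k/(k(n-k))$ as the paper does---is a harmless streamlining.
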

	\begin{proof}
		To decrease $W$ by 1, an existing $k$-cycle must be destroyed and no new
    $k$-cycles can be created.
		If $a \in \{1,2,..., n\}$ is in a $k$-cycle, then the $k$-cycle $C_a$ is
    destroyed by any transposition $\tau = (ab)$ with $b \not \in C_a$ or $b
    \in C_a, b \neq a$. 
		
		In the first case, $C_a$ can be combined with any cycle $C_b$ such that
    $L(C_b) > d$ and $L(C_b) \leq r - k$.  Also, $C_a$ can be combined with any
    cycle $C_b$ with $L(C_b) < d$ if $L(C_b) < k$ and $L(C_b) + k > d.$
		In the second case, where $b \in C_a,$ there are $k - 1$ elements in the
    same cycle that can be paired with $a$. This yields
		\begin{align*}
		\P[B_k] = &\frac{2}{n(n - 1)} \sum_{a = 1}^n \sum_{b \neq a} \ind_{L(C_a) = k} \big( \ind_{L(C_b) > d} \ind_{L(C_b) \leq r - k}
		+ \ind_{L(C_b) < k} \ind_{L(C_b) > d - k}   \big) \\
		&\qquad \qquad + \frac{k - 1}{n (n - 1)} \sum_{a = 1}^{n} \ind_{L(C_a) = k }. 
		\end{align*}
		Noting that 
		\[
			\ind_{L(C_b) > d} \ind_{L(C_b) \leq r - k} = (1 - \ind_{L(C_a) \leq d}) (1 - \ind_{L(C_b) > r - k})
			= 1 - \ind_{L(C_a) \leq d} - \ind_{L(C_b) > r - k} 
		\] 
		because $\ind_{L(C_a) \leq d} \ind_{L(C_b) > r - k} = 0$ yields \eqref{eqqq}.
		
		By the triangle inequality,
		\begin{align*}
			\E \big|W_k - &c_k \P[B_k] \big| \leq \E \Big[ \frac{k - 1}{2 (n - 1)k} \sum_{a = 1}^{n} \ind_{L(C_a) = k } \Big] \\
			&\quad + \E \Big[ \frac{1}{(n - 1)k} \sum_{a = 1}^n \sum_{b \neq a} \ind_{L(C_a) = k} \ind_{L(C_b) > r - k} \Big] \\
			&\quad + \E \Big[ \frac{1}{(n - 1)k} \sum_{a = 1}^n \sum_{b \neq a} \big(\ind_{L(C_a) = k} \ind_{L(C_b) \leq d}
			+ \ind_{L(C_a) = k} \ind_{L(C_b) < k} \ind_{L(C_b) > d - k} \big) \Big].
		\end{align*}
		By Lemma \ref{cycle_containing_element_length}
		\begin{align*}
			\E \Big[ \frac{k - 1}{2 (n - 1)k} \sum_{a = 1}^{n} \ind_{L(C_a) = k } \Big]
			\leq \frac{1}{2 k (n - 1)} \left( 1 +  O \left(\frac{\xi(u)k + u \log(u + 1)}{r} \right) \right)
		\end{align*}
		and
		\begin{align*}
			\E \Big[ \frac{1}{(n - 1)k} \sum_{a = 1}^n \sum_{b \neq a} \ind_{L(C_a) = k} \ind_{L(C_b) > r - k} \Big]
			&\leq \frac{1}{n - k} \left( 1 +  O \left(\frac{\xi(u)k + u \log(u + 1)}{r} \right) \right).
		\end{align*}
		Furthermore,  
		\begin{align*}
			\E& \Big[ \frac{1}{(n - 1)k} \sum_{a = 1}^n \sum_{b \neq a} \big(\ind_{L(C_a) = k} \ind_{L(C_b) \leq d}
			+ \ind_{L(C_a) = k} \ind_{L(C_b) < k} \ind_{L(C_b) > d - k} \big) \Big] \\
			&\qquad \quad = \E \Big[ \frac{1}{(n - 1)k} \sum_{a = 1}^n \sum_{b \neq a} \big(\ind_{L(C_a) = k} \ind_{L(C_b) \leq d, L(C_b) \neq k} + \ind_{L(C_a) = k} \ind_{L(C_b) = k} \\
			&\qquad \qquad \qquad \qquad \qquad \qquad \qquad  + \ind_{L(C_a) = k} \ind_{L(C_b) < k} \ind_{L(C_b) > d - k} \big) \Big]
		\end{align*}
		and by Lemma \ref{cycle_containing_element_length}
		\begin{align*}
			\E \Big[ \frac{1}{(n - 1)k} &\sum_{a = 1}^n \sum_{b \neq a} \big(\ind_{L(C_a) = k} \ind_{L(C_b) \leq d, L(C_b) \neq k} + \ind_{L(C_a) = k} \ind_{L(C_b) = k} \\
			&\qquad \qquad \qquad \qquad \qquad \qquad + \ind_{L(C_a) = k} \ind_{L(C_b) < k} \ind_{L(C_b) > d - k} \big) \Big] \\
			&\leq \left[ \frac{d -1}{k(n - k)} + \frac{2}{n - 1} + \frac{k}{k(n - k)} \right] \left( 1 + O \left(\frac{\xi(u)k + u \log(u + 1)}{r} \right) \right) \\
			&= \left[ \frac{d + k - 1}{k (n - k)}  + \frac{2}{n - 1} \right] \left( 1 + O \left(\frac{\xi(u)k + u \log(u + 1)}{r} \right) \right).
			\qedhere
		\end{align*} 
	\end{proof}
	
	\begin{proof}[Proof of Theorem \ref{main_theorem}]
		By Theorem \ref{stein_multivariate} and Lemmas \ref{multivariate_plus} and \ref{multivariate_minus},
		\begin{align*}
		\| \L(W) - \L(Y) \|_{TV} 
		&\leq \sum_{k = 1}^{d} \frac{\alpha_k}{2} \Big( \E \big|\lambda_k - c_k \P[A_k] \big| + \E \big| W_k - c_k \P[B_k] \big|  \Big) \\
		&\leq \sum_{k = 1}^{d} \frac{1}{2} \left[\frac{2d + 4k}{k(n - k)} + \frac{4}{n - k} + O \left(\frac{\xi(u)k + u \log(u + 1)}{nr} \right)  \right] \\
		&\leq \sum_{k = 1}^{d} \frac{2d}{kn} + \frac{8}{n} + O \left(\frac{\xi(u)k + u \log(u + 1)}{nr} \right) \\
		&= \frac{2d H_d}{n} + \frac{8d}{n} + O \left(\frac{d^2 \xi(u) + du \log(u + 1)}{nr} \right)
		\end{align*}
		where 
		\[
		H_d = \sum_{k = 1}^{d} \frac{1}{k} \leq \log(d) + 1
		\]
		is the $d$th harmonic number.
	\end{proof}

	\section*{Acknowledgments}
	Thank you to Elizabeth Meckes for many helpful discussions.
		
\bibliographystyle{plain}
\bibliography{bib}
\end{document}